\newcommand{\labell}[1] {\label{#1}}
\newlength{\facewd} \newlength{\faceht}%
\newcommand{\ww} {{\bf w}}
\newcommand{\mm} {{\bf m}}
\newcommand{\PP} {{\mathbb P}}
\newcommand{\Ll}{{\mathcal L}}
\newcommand{\less}{{\smallsetminus}}
\newcommand{\p}{{\partial}}
\newcommand{\om}{{\omega}}
\newcommand{\eps}{{\varepsilon}}
\newcommand{\io}{{\iota}}
\newcommand{\la}{{\lambda}}
\newcommand{\si}{{\sigma}}
\newcommand{\Ee}{{\mathcal E}}
\newcommand{\ts}{\textstyle}
\renewcommand{\Tilde}{\widetilde}
\newcommand{\N}{{\mathbb N}}
\newcommand{\Q}{{\mathbb Q}}
\newcommand{\R}{{\mathbb R}}
\newcommand{\C}{{\mathbb C}}
\newcommand{\Z}{{\mathbb Z}}
\newcommand{\Hh}{{\mathcal H}}
\newcommand{\SSS}{{\smallskip}}
\newcommand{\se} {{\stackrel{s}\hookrightarrow}}
\newtheorem{theorem}{Theorem}[section]
\newtheorem{thm}[theorem]{Theorem}
\newtheorem{cor}[theorem]{Corollary}
\newtheorem{lemma}[theorem]{Lemma}
\newtheorem{prop}[theorem]{Proposition}
\newtheorem{defn}[theorem]{Definition}
\newtheorem{rmk}[theorem]{Remark}
\numberwithin{figure}{section}
\numberwithin{equation}{section}
\numberwithin{table}{section}
\newcommand{\MS}{{\medskip}}
\newcommand{\NI}{{\noindent}}
\newcommand{\vol}{{\rm vol}}
\newcommand{\bl}{{\it bl}}
\begin{document}

 \title{Symplectic embeddings and continued fractions: a survey}
 \author{Dusa McDuff}\thanks{partially supported by NSF grant DMS 0604769.}
\address{Department of Mathematics,
Barnard College, Columbia University, New York, NY 10027-6598, USA.}
\email{dmcduff@barnard.edu}
% \author{Felix Schlenk}
\keywords{symplectic embedding, symplectic capacity, continued fractions, symplectic ellipsoid, symplectic packing, lattice points in triangles}
\subjclass[2010]{53D05, 32S25, 11J70}
\date{29 August 2009, revised October 3 2009. Notes for the Takagi lectures, June 2009. A related course of lectures was also given at the MSRI Graduate Summer school in August 2009.}
\begin{abstract}
 As has been known since the time of Gromov's Nonsqueezing Theorem, symplectic embedding questions lie at the heart of symplectic geometry.  After surveying some of the most important ways of measuring the size of a symplectic set, these notes
  discuss some recent developments concerning the question of when
 a $4$-dimensional ellipsoid can be symplectically
  embedded in a ball.  This problem turns out to have unexpected relations to 
 the properties of continued fractions and of exceptional curves 
 in blow ups of the complex projective plane. It is also related to questions of lattice packing of planar triangles.
\end{abstract}

\maketitle
\begin{center} 
\end{center}

%%%%%%%%%%%%%%%%%%%%%%%%%%%%%%%%%%%
\section{Overview.}
%%%%%%%%%%%%%%%%%%%%%%%%%%%%%%%%%%%

The standard symplectic structure on $\R^{2n}$ is:
$$
\om_0 = dx_1\wedge dx_2 + dx_3\wedge dx_4 + \dots + dx_{2n-1}\wedge dx_{2n}.
$$
By {\bf Darboux's theorem} every symplectic form is locally diffeomorphic to this one, so it is crucial to understand its properties.
\MS\MS

\NI
Let $B : = B^{2n}(a)\subset \R^{2n}$ be the standard (closed) ball of radius $\sqrt a$ 
 (so $a$ is proportional to a  $2$-dimensional area),
and let $\phi: B\;\se\; \R^{2n}$ be a {\bf symplectic embedding}  (i.e. a smooth embedding onto $\phi(B)$ that preserves $\om_0$).
Since $\om_0^n = n! \,dx_1\wedge\dots\wedge dx_{2n}$ 
is a volume form, every 
symplectic embedding preserves volume.
\MS

\NI
{\bf Gromov's question:}  {\it  What can one say about the images $\phi(B)$ of a symplectic ball?  How are the symplectic and volume preserving cases different?}
\MS

The following result can be easily proven using Moser's homotopy method.
\MS

\NI
{\bf Volume preserving embeddings:} 
{\it if $V\subset \R^{2n}$ is diffeomorphic to a ball and $\vol\, V = \vol\, B$ then there is a volume preserving diffeomorphism $\psi: B\stackrel{\cong}\to V$.}
\MS

\NI
 The analogous statement is not true for symplectic diffeomorphisms --- called {\it symplectomorphisms} for short ---
  since the  boundary of $V$ has symplectic invariants given by the characteristic foliation.\footnote
  {
  The characteristic foliation on a smooth hypersurface $Q$ is spanned by the unique null direction of $\om|_Q$, i.e. by the 
  vectors $v\in TQ$ such that $\om(v,w)=0$ for all $w\in TQ$.}
   Nevertheless one might ask if one can {\bf fully fill} $V$ by a symplectic ball.  This means that for every $\eps>0$ there is a ball $B$ and a symplectic embedding $\phi:B\se V$ such that $\vol\bigr(V\less \phi(B)\bigr) <\eps$. 

\MS\MS

\NI Let $Z(A)$ be the cylinder 
$$
B^2(A) \times \R^{2n-2} = \{(x_1,\dots,x_{2n})\in \R^{2n}: x_1^2 + x_2^2 \le A\}$$
We provide $Z(A)$ with the symplectic structure $\om_0$.  Thus its first two coordinates lie in  symplectically paired directions.  In particular,
$\om_0$ has non zero integral over each disc $B^2(A)\times \{pt\}$.
\MS

\NI {\bf Gromov's Nonsqueezing Theorem:}  {\it There is a symplectic embedding $$
\phi:B^{2n}(a)\se Z(A)
$$
 if and only if $a\le A$.}

\begin{figure}[htbp] %  figure placement: here, top, bottom, or page
   \centering
   \includegraphics[width=2in]{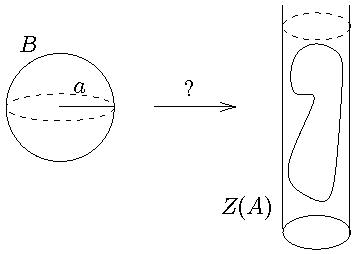} 
   \caption{Does the ball embed symplectically in the cylinder?}
   \label{fig:ex0}
\end{figure}
\MS

This nonsqueezing property is fundamental.  To a first approximation,
it is true that a  diffeomorphism with
this nonsqueezing property for all balls and all symplectic cylinders
must preserve the symplectic structure.  (More precise versions of this statement were proven by Eliashberg \cite{E} and Ekeland--Hofer \cite{EH}.)    

It follows also that symplectomorphisms are very different from volume preserving embeddings.  For example, the largest ball that embeds symplectically in the polydisc $B^2(1)\times B^2(1)$ is $B^{4}(1)$ which has volume just half that of the polydisc. 

%%%%%%%%%%%%%%%%%%%%%%%%%%%%%%%%%%%%%%%%%%%%%%%%%%%%%%%%%%
\subsection{Symplectic capacities.}
%%%%%%%%%%%%%%%%%%%%%%%%%%%%%%%%%%%%%%%%%%%%%%%%%%%%%%%%%%

In their paper \cite{EH}, Ekeland and Hofer formalized the idea of a {\it symplectic capacity}, which is a measurement $c(U,\om)$ of the size of a symplectic manifold $(U,\om)$ (possibly open or with boundary) with the following properties:
\MS

\NI (i)  {\it $c$ is a function with values in $[0,\infty]$ defined 
on some class of  $2n$-dimensional symplectic  manifolds\footnote
{
A smooth manifold $M$ is said to be symplectic if it is provided with a symplectic form $\om$.  Here $\om$ is any closed $2$-form such that $\om^n$ never vanishes. A fundamental result due to Darboux is that every symplectic form is locally diffeomorphic to Euclidean space $\R^{2n}$ with its standard structure $\om_0$.  Hence the importance of understanding  the symplectic properties of subsets of $\R^{2n}$.}
$(M,\om)$ that contains the closures of all open subsets of Euclidean space;}

\NI (ii) (monotonicity)  {\it if there is a symplectic embedding  $(U,\om)\to (U',\om')$ then $c(U,\om) \le c(U',\om')$};\SSS

\NI (iii) (scaling) {\it  $c(U,\la\om) =\la c(U,\om)$ for all $\la > 0$};
\SSS

\NI (iv) (normalization) {\it $c\bigl(B^{2n}(a)\bigr) >0$ and 
$c\bigl(Z^{2n}(a)\bigr)< \infty$}.
\MS

The monotonicity property implies that $c$ is a symplectic invariant, that is, it takes the same value on  symplectomorphic sets, while the scaling property implies that it scales  like a $2$-dimensional invariant.   One can satisfy 
  the first three properties  by considering
an appropriate power of the volume; for example one could consider
the function $c(U,\om): = \bigl(\int_U\om^n\bigr)^{1/n}$.    However, this function $c$ does not satisfy  the second half
of the normalization axiom.  Indeed, the requirement that
a cylinder has finite capacity is what makes this an interesting definition.

Here are some examples that illustrate some of the variety of possible definitions.\MS

\NI (i)  {\bf The Gromov width.}   $c_G(U,\om) = \sup\{\pi a\,|\, B^{2n}(a)\se (U,\om)\}.$

The nonsqueezing theorem implies that this does satisfy the axioms.  Moreover, it satisfies the following strong normalization condition:
$$
 c\bigl(B^{2n}(a)\bigr) = c\bigl(Z^{2n}(a)\bigr).
 $$

\MS

\NI (ii)  {\bf The  Hofer--Zehnder capacity \cite{HZ}}.   Every smooth function $H:M\to \R$ on a closed symplectic manifold generates a $1$-parameter subgroup $\phi^H_t, t\in \R$, of the group of symplectomorphisms of 
$(M,\om)$ by the following procedure;  define the vector field  $X_H$, the {\it symplectic gradient} of $H$, by requiring that
$$
\om(X_H,\cdot) = dH(\cdot),
$$
and then define $\phi_t^H$ to be the flow generated by $X_H$ with $\phi_0^H = id$.
The identity
$$
\Ll_{X_H}(\om) = d\bigl(\io(X_H)\om)\bigr) + \io(X_H) d\om = d(dH) = 0,
$$
(where $\Ll$ denotes the Lie derivative)
shows that  $(\phi_t^H)^*\om = \om$ for all $t$. In other words, the flow preserves the symplectic form. 
 A point $x$ for which there are times $0<t<T$ such that
 $\phi_T^H(x) = x$ but  $\phi_t^H(x)\ne x$ is said to be a nontrivial periodic orbit of $\phi_t^H$ of period $T$.  Further 
 we define $\Hh$ to be the set of functions 
 $H:M\to \R$ with the following properties:\MS
 
 $\bullet$  $H(x)\ge 0$ for all $x\in M$, 
 
  $\bullet$ there is an open subset of $M$ on which $H=0$;
  
   $\bullet$ $H$ is constant outside a compact subset of the interior of $M$;
   
    $\bullet$  every  nontrivial periodic orbit of $H$ has period $T\ge 1$.\MS
    
 Then we define the  Hofer--Zehnder capacity $c_{HZ}$ as follows:
 $$
 c_{HZ}(U,\om) = \sup_{H\in \Hh} \Bigl(\sup_{x\in M} H(x)\Bigr)
%  \{K\,:\, \mbox{every properly normalized }\;H:U\to \R
% \mobx{ has no  
$$
This function $c_{HZ}$ obviously satisfies the first three conditions for capacities.  Moreover as $\la$ increases the flow of $\la H$  moves faster so that the periods of the periodic orbits decrease.  Therefore in order to prove that a set such as $B^{2n}$ has finite capacity one needs a mechanism to prove that periodic orbits for $\phi_t^H$ must exist under suitable circumstances (for example, if 
$H=\la K$  where $K$ satisfies the first three conditions to be in  $\Hh$ and $\la$ is sufficiently large.)  In the original papers this mechanism  involved subtle arguments in variational analysis; however one can also prove such results using $J$-holomorphic methods.\MS

\NI (iii) {\bf The displacement energy}.    If $H_t: M\to \R, t\in [0,1],$ is a smooth family of functions then one can define a path of vector fields $X_{H_t}$ by requiring that 
$$
\om(X_{H_t},\cdot)= d(H_t)(\cdot) \mbox{ for each }t\in [0,1],
$$
and then integrate this family of vector fields to get a flow $\phi_t^H,t\in [0,1]$.
 As before, each diffeomorphism $\phi_t^H$ preserves $\om$, and the family  is called a Hamiltonian path, or Hamiltonian isotopy.
Define the length $\Ll(\{\phi^H_t\})$ of such a path by setting
$$
\Ll(\{\phi^H_t\}): = \int_0^1\Bigl(\sup_{x\in M} H_t(x) - \inf_{x\in M} H_t(x)\Bigr) dt.
$$
(Thus this is a measure of the total variation of $H_t$.)  

If $U\subset (M,\om)$ then a Hamiltonian isotopy $\phi_t^H$ of $M$ is said to disjoin $U$  in $M$ if $\phi^H_1(U)\cap U=\emptyset.$
  The {\it displacement energy} of a subset $(U,\om)$ of
   a symplectic manifold $(M,\om)$ is defined to be:
$$
d_M(U,\om) = \inf \{\Ll(\{\phi^H_t\}): \{\phi^H_t\} \mbox{ disjoins } U\mbox{ in } M \},
$$
where we take the infimum of an empty set of real numbers to be $\infty$.
  It is a deep fact that the displacement energy of a ball is always positive, no matter what $M$ is.    Using this, it is easy to see that this energy defines a capacity on the set of all subsets of a given symplectic manifold $(M,\om)$. Interestingly enough, as explained in \cite{LM} there are relations between the displacement energy of a ball and the nonsqueezing theorem.

Although the displacement energy of a set $(U,\om)$ in principle
  depends on the choice of ambient manifold $(M,\om)$,
 I am not  aware of any examples of sets $U$ that have
different displacement energies when considered as subsets of two different manifolds $(M_1,\om_1)$ and $(M_2,\om_2)$, except in the trivial case when $U$ is displaceable in $M_1$ but not in $M_2$.   As a refinement of this question one could consider
two different symplectic embeddings  $\io_1$ and $\io_2$ of one set $(U,\om)$ into another $(M,\om)$.    Are there any examples where $d_M(\io_1(U,\om))\ne d_M(\io_2(U,\om))?$  This will not happen if the two embeddings are Hamiltonian isotopic (i.e. there is a Hamiltonian isotopy $\phi_t^H$ of $M$ such that $\io_2=\phi_1^H\circ\io_1$).  But there are cases in which  nonisotopic embeddings are known to exist; in other words, the space of embeddings  $(U,\om)\se (M,\om)$ need not be connected.   Such examples (of embeddings of one polydisc $B^2(a_1)\times B^2(a_2)$ into another)  were constructed   in \cite{FHW}. 
 This leads into the whole question of what is the topology of such embedding spaces. Rather little is known about such questions.\MS

The above three capacities are by now classical invariants.  There are many other 
more recently introduced functions that share some of the properties of capacities. For example, Biran and Cornea \cite[\S6,6]{BC} are interested in exploring the \lq\lq size" of Lagrangian
 submanifolds  $L$ of $(M,\om)$.   A Lagrangian
 submanifold is a submanifold of dimension $n$ on which 
 the symplectic form vanishes identically.     A basic example is the subspace $V_{\R}$ of $(\R^{2n},\om_0)$ spanned by the coordinates $x_1,x_3,\dots,x_{2n-1}$.  Lagrangian submanifolds are fundamental objects of study in symplectic geometry.
 
   Let us write $B_{\R}^{2n}$ for the intersection 
   $B^{2n}(a)\cap V_\R$.  Then the {\it width} $w(L)$ of
 a Lagrangian submanifold  $L\subset (M,\om)$  is defined 
 to be the supremum of $\pi a$ for which there is a symplectic embedding
$$
\phi: \bigl(B^{2n}(a), B_{\R}^{2n}(a)\bigr)\se (M,L)
$$
such that $\phi^{-1}(L) =  B_{\R}^{2n}(a)$.
Although some calculations can be made, 
it is not even known whether every compact 
 Lagrangian submanifold of $\R^{2n}$ must have finite width. 

Because so little is known about these general embedding questions, we shall now return to considering  balls and ellipsoids.
  As we shall see, their embedding properties
   are very closely related, at least in dimension $4$.
 We shall first consider the problem of embedding several equal balls, and then discuss ellipsoids in \S\ref{s:ell}.
\MS

%%%%%%%%%%%%%%%%%%%%%%%%%%%%%%%%%%%%%%%%%%%%%%%%%%%%%%%%%%
\subsection{Symplectic ball packing.}
%%%%%%%%%%%%%%%%%%%%%%%%%%%%%%%%%%%%%%%%%%%%%%%%%%%%%%%%%%
\SSS

\NI {\bf Question:}
{\it How much of the volume of $B(1)$ can be filled by $k$ equal symplectic  balls?}\MS

\NI  [Gromov \cite{G}]: \,\,{\it there are obstructions when $k=2$: 

 if $B(a)\sqcup B(a) \se B(1)$ then $a\le 1/2$;}
\MS

\NI in other words, the volume of the image of $B(a)\sqcup B(a)$ is $\le \frac 1{2^{n-1}} \vol\,(B(1)).$\MS

\NI  [McDuff and Polterovich \cite{MP}]: \,\,{\it for any $d\ge 1$ the $2n$-dimensional ball $B(1)$ can be fully filled by $k=d^n$ equal balls.  }\MS

One can explicitly see these embeddings in the following way.
\MS

\NI {\bf Describing embeddings by toric models:}  

 Consider the (moment) map 
$
\Phi:\;\; \C^2\to\R^2, \Phi(z_1,z_2) = (|z_1|^2, |z_2|^2).
$
Then $\Phi(B(1))$ is the (standard) triangle    
$ \bigl\{0\le x_1, x_2;\; x_1+x_2\le 1\bigr\}.
$
Thus the inverse image of the closed triangle can obviously be 
fully  filled by a ball.
 Somewhat surprisingly,  Traynor \cite{Tr} showed that it is possible to fully fill the inverse image of the {\it interior} of any standard triangle.
Hence,  we can obtain  full fillings of $B^4(1)$ by $d^2$  equal balls by cutting a standard triangle into $d^2$ standard pieces as in Figure \ref{fig:jap2}.

\begin{figure}[htbp] %  figure placement: here, top, bottom, or page
   \centering
   \includegraphics[width=2.5in]{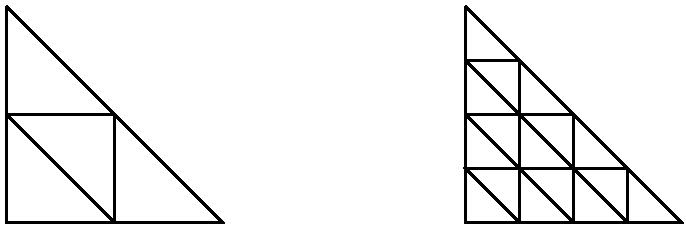} 
   \caption{Embedding $d^2$ standard triangles into a triangle of size $1$.}
   \label{fig:jap2}
\end{figure} \MS

% though some ideas work in all dimensions. Also we write $E\se B$ is there is a smplectic embedding of $E$ into $B$.}}\MS
However, when $k\ne d^2$ the story is different:\MS

\NI
 [McDuff--Polterovich \cite{MP}]: {\it In dimension $4$  there are obstructions to full fillings by $k$ balls when} $k<9, k\ne 1,4$; \SSS

\NI
[Biran \cite{B}]  {\it $B^4(1)$ can be fully filled by $k$ equal balls for all $k\ge 9$.}
\MS\MS

Thus, with enough balls, the obstructions disappear.
In later work \cite{B2}, Biran showed that for all (closed) rational\footnote
{$(M,\om)$ is called {\it rational} if  $[a]\in H^2(M;\Q)$.}
 symplectic $4$-manifolds $(M,\om)$ 
there is an integer $N=N(M,\om)$ such that $(M,\om)$ can be fully filled by $k$ balls for all $k\ge N$.
This is rather surprising.  Note that $N$ depends on $\om$ and can diverge to infinity even when $M$ is fixed.
For example, in the case when $M=S^2\times S^2$ and $\om_k = \frac 1k pr_1^*\si \times pr_2^*\si$ for some area form $\si$ on $S^2$ of total area $\pi$, the nonsqueezing theorem shows that one cannot embed 
a ball larger than $B^4(\frac 1k)$.  Since the volume of $(M,\om_k)$ is $\frac{ \pi^2} k$ while that of $B^4(\frac 1k)$ is $\frac{\pi^2}{2k^2}$, we find that
$N(M,\om_k) \ge 2k$.
Thus, it is not at all clear whether one can remove
 the rationality condition on $[\om]$.  It is also not clear whether there is an analogous result in higher dimensions.

\MS

%%%%%%%%%%%%%%%%%%%%%%%%%%%%%%%%%%%
\section{Embedding ellipsoids into balls}\labell{s:ell}
%%%%%%%%%%%%%%%%%%%%%%%%%%%%%%%%%%%

From now on we shall work in $4$ (real) dimensions, and
shall write  $E\se B$ to mean that there is a symplectic embedding of $E$ into $B$.
 Define the ellipsoid $E(a,1)$ by setting:  
 $$
 E(a,1) = \{(x_1,\dots,x_4)\in \R^4: \frac{x_1^2+x_2^2} a + x_3^2+x_4^2\le 1\}.
 $$
Consider the embedding capacity function $c$ for $a\ge1$:
 $$
 c(a): = \inf\{ \mu: E(a,1)\se B(\mu)\}.
 %\;\mbox{ for }
 %\;\;a\ge1.
 $$
 Note that $c(a)\ge \sqrt {a}$  because $\vol\, E(a,1) = \vol\, B(\sqrt a)$.
\MS

In \cite{CHLS}, Cieliebak, Hofer, Latschev and Schlenk describe a wide variety of symplectic embedding problems in quantitative terms, 
 formulating many interesting questions, but  providing relatively few answers. (See also Schlenk \cite{Sch}.)  In particular, they define the function $c(a)$ described above, but could say rather little about its properties.   
The first most significant open question concerned the value of $c(4)$.  This was in fact first calculated by Opshtein \cite{Op}, though he did not make this explicit in his paper.

As we shall see, the function $c(a)$ turns out to be surprisingly interesting.  It is the first of the capacity functions of 
\cite{CHLS} to be calculated.\footnote{In fact, the proofs of the results stated below are not yet quite complete, so that at this stage  they should still be considered conjectural.}
Note also that although the nonsqueezing theorem stated above is valid  in all dimensions, we only understand the embedding of ellipsoids in $4$-dimensions. As is shown by the work of Guth \cite{Gu} and Hind--Kerman \cite{HK}, very interesting new phenomena appear in higher dimensions.

\begin{figure}[htbp] %  figure placement: here, top, bottom, or page
   \centering
   \includegraphics[width=4in]{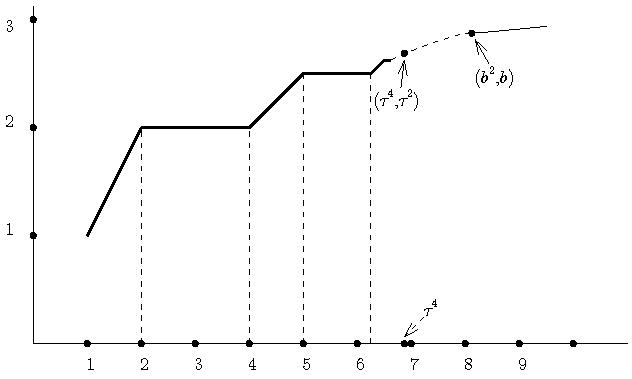} 
   \caption{The approximate graph of $c(a)$. Here $b: = \frac {17}6$.}
   \label{fig:exx}
\end{figure}

\NI
\begin{thm}  [McDuff--Schlenk]\labell{thm:main}    Let $\tau=\frac {1+\sqrt5}{2}$.  The graph of $c(a)$ divides into three parts:\SSS

\NI$\bullet$\,\,  if $1\le a < \tau^4$ the graph is piecewise linear -- an infinite Fibonacci staircase converging to $(\tau^4,\tau^2)$; \SSS

\NI$\bullet$\,\,   $\tau^4\le a< 8\frac 1{36}$  is a transitional region; $c(a)=\sqrt a$ except on a finite number of short intervals;  \SSS

\NI$\bullet$ \,\,if $a\ge 8\frac 1{36} = \bigl(\frac {17}6\bigr)\,\!^2$ then $c(a)=\sqrt a$. 
\end{thm}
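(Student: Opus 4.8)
The plan is to convert the problem, in three stages, into a piece of Diophantine analysis organised by continued fractions: first reduce the ellipsoid embedding $E(a,1)\se B(\mu)$ to a ball–packing problem, then reduce ball packing to positivity conditions coming from the symplectic cone of blow-ups of $\CP^2$, and finally carry out the (long) combinatorial analysis that decides which of these conditions is active for each $a$. \emph{Stage 1 (ellipsoids to balls):} for rational $a=p/q$ in lowest terms the ellipsoid $E(p/q,1)$ decomposes, up to a set of measure zero, as a disjoint union $\bigsqcup_{i=1}^M B(w_i)$ of balls, where the weight sequence $\mathbf w(p/q)=(w_1,\dots,w_M)$ is read off from the continued fraction expansion of $p/q$ and satisfies $\sum_i w_i^2 = a$. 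One shows $E(p/q,1)\se B(\mu)$ iff $\bigsqcup_i B(w_i)\se B(\mu)$ — the ``only if'' via a Traynor-type subdivision of the moment triangle of $E(p/q,1)$, the ``if'' because the package already fills the ellipsoid — and extends to irrational $a$ by continuity of $c$. Thus $c(a)$ is computed entirely from packing numbers of weight packages.

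\emph{Stage 2 (balls to exceptional classes):} by the packing analysis of McDuff--Polterovich and Biran, together with the description of the symplectic cone of rational surfaces (Li--Liu, via Taubes--Seiberg--Witten and symplectic inflation), the package $\bigsqcup_{i=1}^M B(w_i)$ embeds in $B(\mu)$ iff $\mu^2\ge\sum_i w_i^2$ and, for every exceptional class $dL-\sum_i m_iE_i$ on the $M$-fold blow-up of $\CP^2$ — i.e.\ every solution of $\sum_i m_i^2 = d^2+1$, $\sum_i m_i = 3d-1$ that is reducible to some $E_i$ by Cremona moves —
$$
\mu\, d\ \ge\ \sum_{i=1}^M m_i\, w_i.
$$
Combining Stages 1 and 2,
$$
c(a)\ =\ \max\Bigl(\sqrt a,\ \sup_{(d;\mathbf m)}\ \tfrac1d\,\sum_i m_i\,w_i(a)\Bigr),
$$
the supremum over exceptional classes (finitely many matter for each fixed $a$).

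\emph{Stage 3 (the three regimes):} it remains to understand this supremum as $a$ varies. For $a\ge 8\tfrac1{36}$ one must prove that no exceptional class beats the volume term, i.e.\ $\sum_i m_i w_i(a)\le d\sqrt a$ for all of them; the threshold $(17/6)^2$ is forced by a single low-degree exceptional class whose constraint is an equality there and which is dominated by $\sqrt a$ for larger $a$, while all other classes are eliminated by uniform estimates relating $\sum m_i^2$ to $(\sum m_i)^2$ through the adjunction identities and the ``spread'' of the weight sequences of large $a$. For $1\le a<\tau^4$ the obstructing classes form an explicit infinite family indexed by the odd-index Fibonacci numbers, obtained by iterating a fixed pair of Cremona moves; evaluating $\tfrac1d\sum_i m_iw_i(a)$ along this family produces the linear pieces of the staircase, whose convex corners accumulate at $(\tau^4,\tau^2)$ — the golden ratio $\tau$ enters precisely as the growth rate of that recursion, and $\tau^2=\sqrt{\tau^4}$ is why the staircase meets the volume curve in the limit. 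Between $\tau^4$ and $8\tfrac1{36}$ only finitely many classes survive these estimates, giving the finitely many short intervals; the existence of the embeddings realising $c(a)=\sqrt a$ (or the staircase value) on the flat pieces is the ``if'' direction of Stage 2.

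The real work — and the main obstacle — is the Diophantine bookkeeping in Stage 3: classifying the exceptional classes relevant to a given $a$ and proving the sharp uniform inequality that pins down $8\tfrac1{36}$. Continued fractions are the organising tool here, since the weight sequence $\mathbf w(a)$ and the Cremona reduction of exceptional classes run essentially the same algorithm, so that controlling one controls the other; but extracting the exact threshold and the precise Fibonacci family from this requires care. A secondary technical input is the ``if'' direction of Stage 2 along the flat pieces, which rests on the full strength of Taubes--Seiberg--Witten theory via the inflation construction.
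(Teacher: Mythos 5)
Your three-stage reduction --- ellipsoid to ball packing via weight expansions, ball packing to positivity against exceptional classes in blow-ups of $\CP^2$, and then the Diophantine/Fibonacci analysis with the bound $\sum m_iw_i\le\sum m_i=3d-1$ killing obstructions for large $a$ --- is exactly the architecture of the paper's argument (Propositions \ref{prop:wgt}, \ref{prop:2} and \ref{prop:fib}). The one ingredient you underplay is how the upper bound along the staircase is obtained: the paper gets $c(a_n)=\sqrt{a_n}$ at the inner corners from nonnegativity of intersections of the classes $E(a_n)$ with all other exceptional classes, and then interpolates using the monotonicity of $c(a)$ and of $c(a)/a$, rather than evaluating every class's obstruction over the whole interval.
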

\MS

\NI {\bf Description of the  Fibonacci stairs:}   Let $$
g_1=1,\;\; g_2=2, \;\;g_3=5, \;\; g_4=13,\;\; g_5 = 34,\;\; g_6= 89,\dots,
$$
be the odd terms in the sequence of  Fibonacci numbers;  set 
$$
a_n:=\bigr(g_{n+1}/g_n\bigr)\,\!^2,  \;\;b_n:=g_{n+2}/{g_n}$$
%\;\;\mbox{ so that }  
so that $a_n < b_n < a_{n+1},$ and $ a_n\to \tau^4.
$  Here we set $g_0:=1$ for convenience, so that
$a_0=1, b_0=2$.
Then the claim is that for all $n\ge 0$ we have
$$
c(x) = x/\sqrt{a_n} \mbox{ on } [a_n, b_n], \;\;\mbox{ and } \;\;
c(x)=\sqrt {a_{n+1}} \mbox{ on }[b_n, a_{n+1}].
$$
Since $\frac {b_n}{\sqrt{a_n}} = \sqrt {a_{n+1}}$ this gives a continuous graph on the interval $1\le a < \tau^4$.
For example, 
$$
c(2) = c(b_0) = \sqrt{a_1}=2 = c(4),\quad 
c(5) = c(b_1) = {\ts \frac 52 = c(6\frac 14)}.
$$
Note  that the function $c(a)/a$ is continuous and nonincreasing.  (This holds because, for all $\la\ge 1$ and $\eps>0$, we have $E(\la a, 1)\subset E(\la a,\la) = \la E(a,1)\subset \la B(c(a) + \eps)$.)  It follows that the function $c$ is determined on the interval $[1,\tau^4]$ by its values at the points $a_n, b_n$.

 \MS
 
As we describe in more detail below, the first and third parts  of Theorem \ref{thm:main} are fully proven, while the proof of the second statement is almost complete.
The proofs  are based on the two Propositions \ref{prop:wgt} and \ref{prop:2} stated below.
The first relates the ellipsoidal embedding problem to a ball packing problem, while the second gives a recipe for solving  ball packing problems.

\begin{prop}[McDuff \cite{M}]\labell{prop:wgt}  For each rational $a\ge 1$ there is a finite weight expansion $\ww(a) = (w_1,\dots,w_M)$ such that $E(a,1)$ embeds symplectically in the interior of $B^4(\mu)$ if and only if
the disjoint union of balls  $\underset{i\le M}\sqcup B(w_i)$ embeds symplectically in the interior of $B^4(\mu)$.
\end{prop}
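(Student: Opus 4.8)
The plan is to go through the toric (moment-map) picture for the easy implication, and through symplectic blow-ups together with the structure of the symplectic cone of rational surfaces for the other one.

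\emph{Setup and weight expansion.} Under the moment map $(z_1,z_2)\mapsto(|z_1|^2,|z_2|^2)$ of the excerpt, identify $E(a,1)$ with the toric domain lying over the triangle $\Delta(a)$ with vertices $(0,0),(a,0),(0,1)$, and $B^4(\mu)$ with the toric domain over the standard simplex $\mu\Delta(1)$. Define $\ww(a)=(w_1,\dots,w_M)$ recursively by the subtractive Euclidean algorithm: if $a=\ell+r$ with $\ell=\lfloor a\rfloor\ge1$ and $0<r<1$, let $\ww(a)$ be $\ell$ copies of $1$ followed by the rescaled sequence $r\,\ww(1/r)$, while $\ww(k)=(1,\dots,1)$ ($k$ terms) when $a=k$ is an integer. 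An easy induction gives $\sum_i w_i^2=a$, matching the volume constraint.

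\emph{From $E(a,1)\se\intt B^4(\mu)$ to a ball packing.} First I would prove the combinatorial statement that $\Delta(a)$ decomposes, with pairwise disjoint interiors, into triangles $T_1,\dots,T_M$ meeting along edges, each of which is carried onto the standard simplex $\Delta(w_i)$ by an element of $\mathrm{AGL}(2,\Z)$. This is again an induction along the Euclidean steps above: one cuts the standard size-$1$ triangle off the far corner of $\Delta(a)$, is left with $\Delta(a-1)$, and when $a<1$ a reflection and rescaling turns $\Delta(a)$ into $a\,\Delta(1/a)$. Since an $\mathrm{AGL}(2,\Z)$ equivalence of open moment polytopes lifts to a symplectomorphism of the corresponding open toric domains, $\mu^{-1}(\intt T_i)$ is symplectomorphic to $\mu^{-1}(\intt \Delta(w_i))$; by Traynor's full filling of the interior of a standard triangle \cite{Tr}, $B(w_i-\eps)$ embeds there for every $\eps>0$. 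Doing this simultaneously over the interior-disjoint pieces gives $\bigsqcup_{i\le M}B(w_i-\eps)\se\intt E(a,1)$. Composing with the assumed $E(a,1)\se\intt B^4(\mu)$ and letting $\eps\to0$ --- using that the ball-packing problem varies continuously, so packings of all the $B(w_i-\eps)$ yield one of the $B(w_i)$ --- proves this implication.

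\emph{From a ball packing to $E(a,1)\se\intt B^4(\mu)$, and the main obstacle.} Here I would invoke the correspondence between symplectic packings and the symplectic cone: by McDuff--Polterovich \cite{MP}, in the sharp form that ultimately rests on Biran \cite{B} and on Seiberg--Witten theory for rational surfaces, $\bigsqcup_{i\le M}B(w_i)\se\intt B^4(\mu)$ holds if and only if the class $\mu\ell-\sum_i w_ie_i$ lies in the symplectic cone of $X_M:=\CP^2\#M\,\ov{\CP^2}$ (here $\ell$ and $e_1,\dots,e_M$ are the line and exceptional classes), equivalently it has positive square and pairs positively with every class carried by an embedded symplectic sphere. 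The crux is that the \emph{same} condition governs the ellipsoid. Given a symplectic form on $X_M$ in the class $\mu\ell-\sum_i w_ie_i$, one realizes in it the chain of exceptional spheres whose self-intersection numbers are the negative continued-fraction data of $a$; the homology classes of this chain are exactly the integral combinations of the $e_i$ produced by the Euclidean steps of the first part, and positivity of their symplectic areas is precisely what the packing condition encodes. Blowing this chain down returns $\CP^2$ with a form of size $\mu$ in which $E(a,1)$ sits symplectically embedded; conversely, blowing up $B^4(\mu)$ along an embedded $E(a,1)$ and resolving produces exactly such a form. The hard part is making this ellipsoidal blow-up/blow-down dictionary rigorous: identifying the minimal resolution of the weighted blow-up along $E(a,1)$ with $X_M$, matching the areas of its exceptional curves with the weight expansion via the continued fraction of $a$, and --- most delicately --- performing the blow-down symplectically from a purely cohomological hypothesis, which requires first deforming (inflating) the given form to one adapted to the sphere configuration and keeping inductive control of the construction as $\mu$ varies.
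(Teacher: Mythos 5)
Your proposal follows essentially the same route as the paper's sketch in \S\ref{ss:proofs} and as the full argument in \cite{M}: the subtractive continued-fraction decomposition of the moment triangle into $\mathrm{AGL}(2,\Z)$-standard pieces together with Traynor's filling \cite{Tr} for the direction $E(a,1)\se \intt B^4(\mu)\Rightarrow \underset{i}\sqcup B(w_i)\se \intt B^4(\mu)$, and the blow-up/symplectic-cone dictionary with inflation for the converse. The one step I would make you justify is the limit from $\sqcup B(w_i-\eps)$ to $\sqcup B(w_i)$ --- the closed balls cannot embed in $\intt E(a,1)$ itself because $\sum_i w_i^2=a$ forces equality of volumes --- but since the hypothesis $E(a,1)\se\intt B^4(\mu)$ yields the strict inequality $a<\mu^2$ by compactness, Proposition \ref{prop:2} closes exactly this gap.
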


\begin{figure}[htbp] %  figure placement: here, top, bottom, or page
   \centering
   \includegraphics[width=2in]{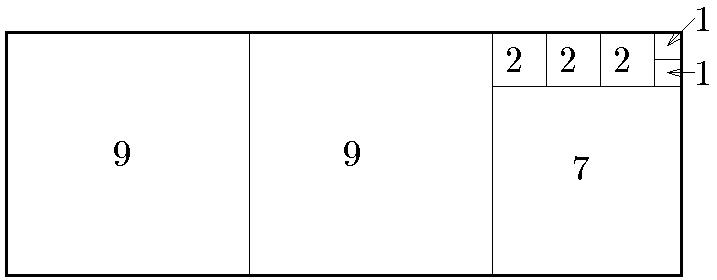} 
   \caption{$\ww(25/9) = (1,\,1,\,7/9,\,2/9,\,2/9,\,2/9,\,1/9,\,1/9)$
   which we abbreviate as $(1^{\times2}, 7/9, (2/9)^{\times3}, (1/9)^{\times2})$.  The {\it multiplicities} $2,1,3,2$ of the weights are the terms (or partial quotients) of the continued fraction  representation $[2;1,3,2]$ of $25/9$.}
   \label{fig:jap3}
\end{figure}

Weight expansions are perhaps best explained pictorially as in Figure \ref{fig:jap3}, but here is a formal definition.

\begin{defn}\labell{def:wa}
Let $a=p/q\in \Q$ written in lowest terms.
The {\bf weight expansion} $\ww: = (w_i): = (w_1,\dots w_k)$ of $a\ge 1$
is defined recursively as follows:
\MS

$\bullet$   $w_1 = 1, $ and $ w_n\ge w_{n+1}>0$ for all $n$;
\SSS
 
$\bullet$  if $ w_i>w_{i+1} = \dots = w_{n}$ (where we set $w_0: = a$)
then
$$
w_{n+1} = \left\{\begin{array} {ll}
 w_{n} &\mbox{if }\;  w_{i+1} + \dots + w_{n+1}=(n-i+1)w_{i+1} \le w_i\\
 w_i - (n-i) w_{i+1} &\mbox{otherwise;} \end{array}\right.
$$

$\bullet$ the sequence stops at $w_n$ if the above formula gives $w_{n+1}=0$.
\SSS

\NI
The number $k$ of nonzero entries in $\ww(a)$ is called the {\bf length} $\ell(a)$ of $a$.
\end{defn}

The second main proposition solves the ball packing problem.
Given $k$ write $\mm: = (m_1,\dots,m_k)\in \N^k$, $m_i\ge m_{i+1}$. Let
\begin{eqnarray*}
\Ee:=\Ee_k:&=& \bigl\{(d;\mm): d^2+1= \sum_im_i^2,\;\; 3d-1=\sum_im_i,\; 
 \mbox{ and  } (*)\bigr\}.
 \end{eqnarray*}
 We shall refer to the first two conditions above as Diophantine conditions.  The third condition $(*)$ is more algebraic,
requiring that tuple $(d;\mm)$ can be reduced to $(0;1)$  
 % $dL-\sum m_iE_i$ can be reduced to $E_1$ 
  by repeated Cremona moves.
Such a move takes $(d,\mm)$ to $(d';\mm')$, where $(d';\mm')$ is obtained by first
  transforming $(d;\mm)$ to
$$
(2d-m_1-m_2-m_3; d-m_2-m_3, d-m_1-m_3, d-m_1-m_2, m_4, m_5\dots),
$$ 
and then reordering the new $m_i$ (discarding zeros) so that they do not increase.

It was shown in Li--Li \cite{LL} (using Seiberg--Witten theory) that $\Ee_k$ is
  the set of homology classes $dL-\sum m_iE_i$ represented by symplectic exceptional divisors in the $k$-fold blow up  of $\C P^2$.\footnote
  {
  Here we denote by $L$ the class of a line $[\C P^1]$ and by $E_i$ the class of the $i$th exceptional divisor.  For more about blow ups see \S\ref{ss:proofs}.  Observe also that
  this is a place where symplectic geometry shows how flexible it is  in comparison with algebraic geometry. In algebraic (or complex) geometry, one would want to describe 
 the homology classes that can be represented by holomorphically embedded exceptional divisors for a \lq\lq generic" complex structure on the blow up.  This question is far from being understood.}
  It follows that the intersection number $(d;\mm)\cdot(d';\mm') : = dd'-\sum m_i m_i'$ of any two elements in $\Ee$ is nonnegative. 
  \MS
  
  \begin{prop} [McDuff-Polterovich \cite{MP}, Biran \cite{B}] \labell{prop:2} $$
\underset{i\le k}\sqcup B(w_i)\se B(\mu) \;\Longleftrightarrow a<\mu^2 \mbox{ and } \mu d \ge \sum m_iw_i \ \forall
(d;\mm)\in \Ee_k.
$$
\end{prop}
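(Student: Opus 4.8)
The plan is to translate the ball-packing problem into a question about symplectic forms on the $k$-fold blow-up $X_k$ of $\CP^2$ and then invoke the structure of the symplectic cone together with the classification of exceptional classes recorded in $\Ee_k$. Recall that a symplectic embedding $\sqcup_{i\le k} B(w_i)\se B(\mu)$ (into the interior) exists if and only if one can perform a symplectic blow-up of $B(\mu)$, viewed as an open subset of $\CP^2$ with its standard form scaled so that the line has area $\mu$, along $k$ disjoint balls of sizes $w_i$. By the blow-up/blow-down correspondence of McDuff--Polterovich, such a multiple blow-up exists precisely when the cohomology class $\mu L-\sum_i w_i E_i\in H^2(X_k;\R)$ can be represented by a symplectic form $\Om$ taming some almost complex structure, equivalently when this class lies in the (closure of the) symplectic cone of $X_k$.

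The key step is then to identify the symplectic cone. For rational classes this is governed by Seiberg--Witten--Taubes theory: a class $\al$ with $\al^2>0$ lies in the symplectic cone if and only if $\al\cdot\al>0$, $\al$ pairs positively with the (single) $\om$-symplectic canonical class, and $\al\cdot \Ee\ge 0$ for every class $\Ee$ represented by a symplectic exceptional sphere. The first condition $\al^2=\mu^2-\sum w_i^2>0$ corresponds to the volume constraint; using the defining relations of the weight expansion one checks $\sum w_i^2=a$, so this reads $a<\mu^2$, accounting for the first inequality in the Proposition. The positivity against the canonical class $-3L+\sum E_i$ is automatic here once $\mu$ is large enough and in any case follows from the exceptional-class inequalities applied to suitable classes. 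The heart of the matter is that the exceptional spheres are exactly the classes $dL-\sum m_iE_i$ with $(d;\mm)\in \Ee_k$, which is the Li--Li theorem quoted in the excerpt; pairing $\mu L-\sum w_iE_i$ with such a class gives exactly the constraint $\mu d\ge \sum_i m_iw_i$. So the collection of Diophantine-plus-Cremona inequalities is precisely the list of SW constraints, and their nonnegativity (noted after Proposition~\ref{prop:2}) guarantees that no single inequality is vacuous.

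I would organize the argument as follows. First, state the correspondence ``ball packing $\iff$ existence of symplectic form in class $\mu L-\sum w_iE_i$ on $X_k$,'' citing McDuff--Polterovich for the forward direction and the inflation/blow-down technique for the converse; here one must be slightly careful about open versus closed balls, and work with embeddings into the open ball so that strict inequalities appear. Second, reduce to rational $\mu$ and rational $w_i$ by continuity and a limiting argument, since $\Ee_k$ is a fixed countable set and the inequalities are closed conditions. Third, apply the description of the symplectic cone in terms of pairing positively with all exceptional classes (this is where Taubes's ``SW $=$ Gr'' and the work of Li--Liu enter), and verify that $\Ee_k$ enumerates these exceptional classes and that $\sum w_i^2=a$ so the volume condition becomes $a<\mu^2$. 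Fourth, run the Cremona reduction to confirm that the combinatorial description of $\Ee_k$ via reduction to $(0;1)$ matches the geometric list of exceptional classes, so that checking finitely relevant inequalities suffices in practice.

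The main obstacle is the converse direction of the cone description: showing that if $\mu L-\sum w_iE_i$ pairs positively with every exceptional class (and has positive square), then it is actually represented by a symplectic form, hence the balls embed. This requires the full force of Seiberg--Witten theory on rational surfaces together with an inflation argument to move from the existence of a symplectic form in some class to one in the desired class; controlling this inflation so as not to lose the open/strict nature of the embedding, and handling the non-generic boundary behaviour when $a=\mu^2$ or when one of the inequalities $\mu d=\sum m_iw_i$ is an equality, is the delicate part. The forward direction, by contrast, is essentially just positivity of intersections: an embedded symplectic exceptional sphere must meet a symplectic surface in the class $\mu L-\sum w_iE_i$ nonnegatively, giving $\mu d\ge\sum m_iw_i$ immediately, and the volume bound is elementary.
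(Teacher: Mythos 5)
Your proposal follows essentially the same route as the paper, which (being a survey) only sketches the argument in \S\ref{ss:proofs}: use the symplectic blow-up correspondence to convert the ball packing problem into the existence of a symplectic form in the class $\mu L-\sum_i w_iE_i$ on the $k$-fold blow-up of $\C P^2$, and then settle that existence question via $J$-holomorphic curve and Seiberg--Witten techniques as in the cited work of McDuff--Polterovich, Biran and Li--Li. The points you flag as delicate (strict versus non-strict inequalities at the boundary, the inflation step) are precisely the details the survey defers to those references.
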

\begin{cor}
$c(a) =\sup\Bigl\{\sqrt a, \;\underset{(d,\mm)\in \Ee_k}\sup \frac{\sum m_iw_i(a)}d\Bigr\}$.
\end{cor}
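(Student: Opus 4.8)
The plan is to deduce the Corollary by chaining Propositions~\ref{prop:wgt} and~\ref{prop:2} together; all the real content already sits in those two results, so what remains is essentially bookkeeping, the one subtle point being the discrepancy between the \emph{open} target ball in Proposition~\ref{prop:wgt} and the \emph{closed} one in Proposition~\ref{prop:2}. Throughout I would fix a rational $a\ge 1$ (this is the case in which the weight expansion $\ww(a)$, hence the right-hand side, is defined), put $k=\ell(a)$, abbreviate $w_i=w_i(a)$, and write
$$
M(a):=\sup\Bigl\{\sqrt a,\ \sup_{(d;\mm)\in\Ee_k}\frac{\sum_i m_iw_i}{d}\Bigr\}.
$$
First I would record that each $(d;\mm)\in\Ee_k$ has $d\ge 1$, so that these quotients make sense: from $\sum m_i^2=d^2+1\ge 1$ not all $m_i$ vanish, hence $3d-1=\sum m_i\ge 1$; and the Diophantine conditions give $3d\in\Z$ and $d^2\in\Z$, which together force $d\in\Z$.

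The key step is the equivalence
$$
E(a,1)\se\intt B(\mu)\quad\Longleftrightarrow\quad \mu>M(a).
$$
For "$\Leftarrow$": pick $\mu'$ with $M(a)<\mu'<\mu$. Then $a<\mu'^2$, and $\mu'>(\sum_i m_iw_i)/d$, so in particular $\mu'd\ge\sum_i m_iw_i$, for every $(d;\mm)\in\Ee_k$; hence Proposition~\ref{prop:2} gives $\sqcup_{i\le k}B(w_i)\se B(\mu')$, which composed with the symplectic inclusion $B(\mu')\subset\intt B(\mu)$ gives $\sqcup_{i\le k}B(w_i)\se\intt B(\mu)$, and then Proposition~\ref{prop:wgt} yields $E(a,1)\se\intt B(\mu)$. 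For "$\Rightarrow$": Proposition~\ref{prop:wgt} gives $\sqcup_{i\le k}B(w_i)\se\intt B(\mu)$; since the image of this finite disjoint union of closed balls is compact, it lies in $B(\mu')$ for some $\mu'<\mu$, so Proposition~\ref{prop:2} gives $a<\mu'^2$ and $\mu'd\ge\sum_i m_iw_i$ for all $(d;\mm)\in\Ee_k$, i.e.\ $\mu'\ge M(a)$, whence $\mu>M(a)$. (Applying this with $\mu$ large enough that $E(a,1)$ embeds in $B(\mu)$ at all also shows $M(a)<\infty$.)

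Finally I would observe that $c(a)=\inf\{\mu:E(a,1)\se B(\mu)\}$ equals $\inf\{\mu:E(a,1)\se\intt B(\mu)\}$: one inequality is immediate, and for the other, compactness of $E(a,1)$ shows that any symplectic embedding into $B(\mu)$ has image inside $\intt B(\mu')$ for every $\mu'>\mu$. Combining this with the displayed equivalence gives $c(a)=\inf\{\mu:\mu>M(a)\}=M(a)$, which is the stated formula. (The values of $c$ at irrational arguments, where the right-hand side is not literally defined, are then pinned down by the continuity of $c$ recorded just after Theorem~\ref{thm:main}.) The only genuine care needed in this argument — which I would flag rather than call a real obstacle — is the open/closed passage between the two propositions, and that is exactly what the two compactness remarks above handle; everything else is formal.
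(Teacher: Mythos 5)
Your argument is correct and is exactly the route the paper intends: the Corollary is stated there without proof as an immediate consequence of Propositions \ref{prop:wgt} and \ref{prop:2}, and your write-up simply makes that chaining explicit. The care you take with the open/closed ball discrepancy and with passing to strictly smaller radii $\mu'<\mu$ is the right way to fill in the only detail the paper suppresses.
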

\MS

\NI  {\bf Example:}  $\Ee_4 $ has the single element $(1;1,1)$ (corresponding to $L-E_1-E_2$) and 
$\ww(4)=(1,\,1,\,1,\,1) = (1^{\times4})$.  Therefore  $c(4)=2$. However
 $\Ee_5$ also contains $(2;1,\dots,1)= (2; 1^{\times5})$ corresponding to $2L-\sum_{i=1}^5 E_i$.   Thus $c(5) = \sup \{\sqrt 5, 2, 5/2\} = 5/2$.
\MS

The existence of the Fibonacci staircase in Theorem \ref{thm:main} 
is based on the somewhat surprising discovery that 
there are elements of $\Ee$ related to the weight expansions of the odd Fibonacci numbers.  

\begin{prop} [McDuff--Schlenk \cite{MSch}]\labell{prop:fib} 
Denote the odd Fibonacci numbers by $g_n, n\ge 1$, as above, and define $a_n:=(g_{n+1}/g_n)^2$ and  $b_n:=g_{n+2}/g_n$. Then:

\NI
{\rm (i)}
 $
 E(b_n): = \bigl(g_{n+1};g_n\ww(b_n)\bigr)\in \Ee.
 $
 
 \NI {\rm (ii)}  $
E(a_n): = \bigl(g_ng_{n+1}; g_n^2(\ww(a_n)),1\bigr)\in \Ee.
$
  \end{prop}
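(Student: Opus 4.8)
The two assertions are of the same type — each says that a specific tuple lies in $\Ee$ — so I would prove them in parallel. Recall that $(d;\mm)\in\Ee$ requires both the Diophantine equations $d^2+1=\sum_i m_i^2$ and $3d-1=\sum_i m_i$ and the condition $(*)$ that $(d;\mm)$ can be carried to $(0;1)$ by Cremona moves. Part (a), the Diophantine equations, is elementary bookkeeping; part (b), condition $(*)$, carries the real content.

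For (a), the key inputs are two identities for the weight expansion $\ww(c)$ of a rational $c=p/q$ in lowest terms: the volume identity $\sum_i w_i(c)^2=c$ (already implicit in \cite{M}), and $\sum_i w_i(c)=(p+q-1)/q$, the latter proved by a short induction on the recursion of Definition \ref{def:wa} (or read off from the fact that the multiplicities of the $w_i$ are the partial quotients of the continued fraction of $c$). Feeding in $c=b_n=g_{n+2}/g_n$ turns the two equations for $E(b_n)=(g_{n+1};g_n\ww(b_n))$ into $g_ng_{n+2}=g_{n+1}^2+1$ and $g_n+g_{n+2}=3g_{n+1}$; feeding in $c=a_n=g_{n+1}^2/g_n^2$, and noting that the extra entry $1$ in $E(a_n)=(g_ng_{n+1};g_n^2\ww(a_n),1)$ adds $1$ to each side, turns the two equations for $E(a_n)$ into the tautology $g_n^2g_{n+1}^2+1=g_n^2g_{n+1}^2+1$ and into $g_{n+1}^2-3g_ng_{n+1}+g_n^2=-1$. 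All of these are standard identities for the odd-index Fibonacci numbers, which I would obtain by a joint induction from the recursion $g_{n+1}=3g_n-g_{n-1}$; the Cassini-type identity $g_{n+1}^2-g_ng_{n+2}=-1$ is itself a telescoping consequence of that recursion, and it implies the remaining one.

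For (b) I would induct on $n$. The base case is $E(b_0)=E(a_0)=(1;1,1)$ (using $\ww(2)=(1,1)$ and $\ww(1)=(1)$), which one Cremona move sends to $(0;1)$. For the inductive step the plan is to produce an explicit finite sequence of "greedy" Cremona moves — at each stage, reorder the entries into nonincreasing order and apply the move to the three largest — carrying $E(b_n)$ to $E(b_{n-1})$, and carrying $E(a_n)$ to $E(a_{n-1})$, up to reordering and deletion of terminal zeros. (For instance $E(b_2)=(5;2^{\times6},1^{\times2})\to(4;2^{\times3},1^{\times5})\to(2;1^{\times5})=E(b_1)\to(1;1,1)=E(b_0)$.) Verifying this requires understanding the recursive structure of the weight expansions: one must see that the continued fraction of $g_{n+2}/g_n$ is built from that of $g_{n+1}/g_{n-1}$ by prepending a controlled block of partial quotients, and then check that the prescribed greedy moves strip off exactly the weights coming from that block. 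Along the way the Diophantine equations are preserved by every Cremona move, and one verifies from them that $d$ stays nonnegative and strictly decreases at each greedy step, so the process terminates.

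The main obstacle is exactly this last bookkeeping — controlling $\ww(b_n)$ and $\ww(a_n)$ precisely enough to know which three entries each greedy move acts on and to recognize its output as the next member of the family. A convenient running check is the positivity of intersection numbers $(d;\mm)\cdot(d';\mm')\ge0$ for members of $\Ee$ (noted after Proposition \ref{prop:2}): it must hold between the successive tuples in the descent. If one wanted a softer argument, it would be enough to show that the greedy reduction of $E(b_n)$ (resp. $E(a_n)$) never meets a tuple with $m_1+m_2+m_3\le d$ before reaching $d\le 1$, since the only tuples with $d\le 1$ obeying the Diophantine equations are $(1;1,1)$ and $(0;1)$; but checking that hypothesis seems to call for essentially the same control of the weight expansions.
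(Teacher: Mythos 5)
Your treatment of the Diophantine conditions is correct and is exactly what the paper intends: the weight identities $\sum_i w_i(p/q)^2=p/q$ and $\sum_i w_i(p/q)=(p+q-1)/q$ reduce everything to $g_n+g_{n+2}=3g_{n+1}$ and $g_ng_{n+2}=g_{n+1}^2+1$, which follow from the recursion $g_{n+1}=3g_n-g_{n-1}$. The gap is in your verification of condition $(*)$: the inductive step you propose --- that greedy Cremona moves carry $E(b_n)$ to $E(b_{n-1})$ and $E(a_n)$ to $E(a_{n-1})$ --- is not what actually happens, and your example $E(b_2)\to E(b_1)$ is a small-$n$ accident. Already for $n=3$ one has $E(b_3)=(13;5^{\times 6},4,1^{\times 4})$, and the greedy reduction runs through degrees $13\to11\to7\to4\to3\to2$, arriving at $E(b_1)=(2;1^{\times 5})$ after five moves without ever passing through $E(b_2)=(5;2^{\times 6},1^{\times 2})$ (it passes instead through $(3;2,1^{\times 6})$). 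The correct periodicity, and the one the paper's sketch uses, is that $E(b_n)$ reduces to $E(b_{n-2})$ under five Cremona moves (e.g.\ $E(b_4)=(34;13^{\times 6},11,2^{\times 5},1^{\times 2})$ descends through degrees $34\to29\to19\to11\to8\to5$ to exactly $E(b_2)$), so the induction must step by two and requires both $E(b_0)$ and $E(b_1)$ as base cases.

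The more serious underestimate is your parallel treatment of $E(a_n)$. Its greedy reduction does not pass through $E(a_{n-1})$ (nor $E(a_{n-2})$): for $E(a_3)=(65;25^{\times 6},19,6^{\times 3},1^{\times 7})$ the descent reaches degree $10$ at the tuple $(10;5,5,5,4,1^{\times 10})$, which is not $E(a_2)=(10;4^{\times 6},1^{\times 5})$, and no one-step recursion among the $E(a_n)$ emerges. This is precisely what the paper flags: the reduction of $E(a_n)$ is ``much more complicated and takes much more effort to analyze'' than that of $E(b_n)$, and it is where the real content of part (ii) lies. In summary, your overall strategy (weight-sum identities for the Diophantine part, explicit Cremona descent for $(*)$) coincides with the paper's, but the inductive skeleton is wrong for the $b_n$ (step two, not one) and is not available in the stated form for the $a_n$.
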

\begin{proof}[Idea of proof]
It is not hard to show that the elements $E(a_n)$ and $E(b_n)$ satisfy the Diophantine conditions needed to be in $\Ee$.  Moreover, there is an easy inductive proof that $E(b_n)$ satisfies the third condition since  $E(b_n)$ reduces to $E(b_{n-2})$ under five Cremona moves.  However, the corresponding behavior of $E(a_n)$ is much more complicated and takes much more effort to analyze.
\end{proof}

\NI
{\bf Example:}\,\, $n=2$ gives:   
$(5;2\ww(13/2)) = (5; 2^{\times6},\,1^{\times2})$, while 
$$
5L-2(E_1+\dots + E_6) - E_7-E_8\in \Ee_8.
$$
Similarly, $n=4$ gives 
$$
(34; 13 \ww(89/13)) =(34;13^{\times6}, 11, 2^{\times5}, 1^{\times2}),
$$
and one can check that 
$$
34L-13(E_1+\dots + E_6) - 11 E_7 - 2(E_8+\dots+E_{12}) - E_{13}-E_{14} \in \Ee.
$$
\MS

\NI
\begin{cor} For all $n\ge 1$,  $c(a_n) = \sqrt {a_n}$ and
$c(b_n) = \sqrt{a_{n+1}}=c(a_{n+1})$.
\end{cor}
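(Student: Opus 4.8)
The plan is to read off both equalities directly from the corollary to Proposition~\ref{prop:2} (the formula $c(a)=\sup\bigl\{\sqrt a,\ \sup_{(d;\mm)\in\Ee_{\ell(a)}}\frac{\sum_i m_i w_i(a)}{d}\bigr\}$ for rational $a$), by feeding in the two explicit exceptional classes produced in Proposition~\ref{prop:fib}. The only auxiliary inputs are: the basic identity $\sum_i w_i(a)^2=a$ for every rational $a\ge1$ (immediate by induction from Definition~\ref{def:wa}, and in any case forced by the Diophantine condition in Proposition~\ref{prop:fib}); the Fibonacci identity $g_{n+1}^2+1=g_ng_{n+2}$; and the fact that $a\mapsto c(a)$ is nondecreasing, which holds because $E(a,1)\subset E(a',1)$ whenever $a\le a'$. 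The lower bounds come first: $c(a_n)\ge\sqrt{a_n}$ is just the volume bound $c(a)\ge\sqrt a$, while for $b_n$ the class $E(b_n)=(g_{n+1};\,g_n\ww(b_n))\in\Ee_{\ell(b_n)}$ of Proposition~\ref{prop:fib}(i) contributes to the supremum the value
$$
\frac{\sum_i\bigl(g_n w_i(b_n)\bigr)w_i(b_n)}{g_{n+1}}
=\frac{g_n\sum_i w_i(b_n)^2}{g_{n+1}}
=\frac{g_n b_n}{g_{n+1}}
=\frac{g_{n+2}}{g_{n+1}}
=\sqrt{a_{n+1}},
$$
so $c(b_n)\ge\sqrt{a_{n+1}}$.

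For the matching upper bound $c(a_n)\le\sqrt{a_n}$, I would take an arbitrary $(d;\mm)\in\Ee_{\ell(a_n)}$ and pair it against the class $E(a_n)=(g_ng_{n+1};\,g_n^2\ww(a_n),\,1)$, which lies in $\Ee_{\ell(a_n)+1}$ by Proposition~\ref{prop:fib}(ii); that is, $E(a_n)$ is a symplectic exceptional class in the $(\ell(a_n){+}1)$-fold blow-up, and $(d;\mm)$ (viewed with last coefficient $0$) is another one, distinct from it, so their intersection number is nonnegative:
$$
d\cdot g_ng_{n+1}-\sum_{i=1}^{\ell(a_n)}m_i\cdot g_n^2 w_i(a_n)-0\cdot1\ \ge\ 0 .
$$
Dividing by $d\,g_n^2$ gives $\frac{\sum_i m_i w_i(a_n)}{d}\le\frac{g_{n+1}}{g_n}=\sqrt{a_n}$, and since $(d;\mm)$ was arbitrary the corollary forces $c(a_n)=\sqrt{a_n}$.

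Finally, for $b_n$: since $c$ is nondecreasing and $b_n<a_{n+1}$, the previous step gives $c(b_n)\le c(a_{n+1})=\sqrt{a_{n+1}}$, which combined with the lower bound yields $c(b_n)=\sqrt{a_{n+1}}=c(a_{n+1})$. The genuinely hard ingredient in all of this is Proposition~\ref{prop:fib}(ii) — that $E(a_n)$ reduces to $(0;1)$ under Cremona moves. As the excerpt notes, $E(a_n)$, unlike $E(b_n)$ (which reduces five steps at a time in an obviously periodic way), has no such simple periodicity, so controlling its Cremona orbit is the technical core of \cite{MSch}; granting that, everything above is a short computation.
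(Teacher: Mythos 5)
Your proposal is correct and follows essentially the same route as the paper: the upper bound $c(a_n)\le\sqrt{a_n}$ comes from pairing an arbitrary $(d;\mm)$ against the class $E(a_n)$ of Proposition~\ref{prop:fib}(ii) via positivity of intersections, the lower bound $c(b_n)\ge\sqrt{a_{n+1}}$ comes from using $E(b_n)$ itself as the obstruction together with $\sum_i w_i(b_n)^2=b_n$, and the final equality follows from $b_n<a_{n+1}$ and monotonicity of $c$. Your explicit remark that the extra coefficient $1$ in $E(a_n)$ guarantees it is distinct from the padded $(d;\mm)$ (so that the intersection is genuinely nonnegative) is a point the paper leaves implicit, but it is the same argument.
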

\begin{proof}
Since the intersection of any two elements of $\Ee$ is nonnegative, 
Proposition \ref{prop:fib} part (ii) implies that for any $(d;\mm)\in \Ee$ we have
$$
(d;\mm)\cdot E(a_n) = dg_ng_{n+1} - \sum g_n^2\,m_i w_i(a_n)  - m_{k+1}\ge 0,
$$
where $k: = \ell(a_n)$. Thus
$$
\frac {\mm\cdot\ww(a_n)}d = \frac{\sum m_i w_i(a_n)}{d} 
\le \frac{d\,g_n g_{n+1}} {d\,g_n^2} = \sqrt{a_n}.
$$
It follows that $c(a_n) = \sqrt{a_n}$.  

On the other hand, part (i) of the proposition implies that
\begin{eqnarray*}
d\mu \;\ge\;\sum m_iw_i(b_n) &\Longrightarrow & g_{n+1}\mu\;\ge\; g_n\sum w_i(b_n)^2
\;=\; g_n b_n\\
&\Longrightarrow & \mu\;\ge\;  (g_n g_{n+2})/(g_{n+1}g_n) = \sqrt {a_{n+1}}.
\end{eqnarray*}
This shows $c(b_n)\ge\sqrt{ a_{n+1}}$.  Since $b_n<a_{n+1}$ we must have $c(b_n)\le c(a_{n+1})=\sqrt{ a_{n+1}}$.  Thus
$c(b_n)=c(a_{n+1})$.
\end{proof}

\NI {\bf Claim I:} {\it  $c(a) = \frac {1+a}3$ on the interval $[\tau^4,7]$, with obstruction given by the class $(d;\mm) = (3,2,1^{\times6})\in \Ee_7$.}
\MS

  So far this has been proved for $a>6\frac {11}{12}$ and 
for the (even) convergents to $\tau$.  The proof is quite hard because there are \lq\lq fake" elements of $\Ee$.  More precisely, by looking at ratios of even Fibonacci numbers one can construct tuples $(d;\mm)$ 
that satisfy the Diophantine conditions to be in $\Ee$ but that fail condition $(*)$; in fact  they have negative intersection
with $(3,2,1^{\times 6})$, that is 
$$
3d< 2m_1+ m_2 + \dots +m_7.
$$   There are also infinitely many other elements of $\Ee$, this time constructed from the even Fibonacci numbers,
for which $(\sum m_iw_i(a))/d = \frac {1+a}3$ on some interval $[\tau^4,\tau^4+\eps]$. (These form what might be called a {\it ghost staircase} because it lies below the graph of $c(a)$.)
Thus one cannot understand $c(a)$ on this interval by easy
 estimates using 
the Diophantine conditions.

  On the other hand, this approach is sufficient to show that there are no obstructions on  the interval  $a\ge 8\frac 1{36}$ and that there are finitely many obstructions on  the interval  $7\le a\le 8\frac 1{36}$.  Full details of the proof will be given in \cite{MSch}.
\MS

\NI
{\bf Why there are no constraints for $a>9$.}   
\MS

We saw above that  $c(a) = \sup_{(d,\mm)\in \Ee} \frac{\sum m_iw_i(a)}d$.  Hence
$c(a)>\sqrt a$ only if there is $(d,\mm)\in \Ee$ such that
$ \sum m_iw_i(a)/d>\sqrt a$.   But $\sqrt a\ge 3$, and
$$
\sum m_iw_i \le \sum m_i = 3d-1 \;\;\;\mbox{ since all } w_i\le 1.
$$
So $\sum m_iw_i/d \le 3$ always.  This was Biran's argument in \cite{B}.   
 More complicated versions of this argument also show there are no constraints for $a\ge 8\frac 1{36}.$
\MS

\NI {\bf The special properties of $a=\tau^4$.}\MS

 $a=\tau^4$ is the positive root of $3\sqrt a=a+1$.  The sharpest constraints come  from $(d,\mm)$ where $\mm\approx \la \ww(a)$.  
Since $\sum w_i^2 = a$ and $\sum m_i^2 \approx d^2,$ we need $\la\approx d/\sqrt a$.  But also
$$
 \sum w_i=1+a-1/q\approx 1+a, \;\;\quad
\sum\la w_i \approx \sum_i m_i \approx 3d  
$$
implies $3d\approx \la(1+a)$ or $3\sqrt a \approx 1+a$.
\MS\MS

%%%%%%%%%%%%%%%%%%%%%%%%%%%%%%%%%%%%%%%%%%%%%%%%%%%%%%%
\subsection{Relation to lattice packing and embedded contact homology.}
%%%%%%%%%%%%%%%%%%%%%%%%%%%%%%%%%%%%%%%%%%%%%%%%%%%%%%%

The embedded contact homology theory of Hutchings and 
Taubes \cite{HT} is not yet fully developed.  But, according to Hutchings, the index calculations in this theory
 should provide a series of obstructions to symplectically embedding $E(a,1)$ into $B(\mu)$ of the following nature.

 Let $a\ge 1$ be irrational.
For each pair of integers $A,B\ge 0$,  consider the triangle 
$$
T^a_{A,B}: = \bigl\{(x,y)\in \R^2% \less \{(0,0)\}
: x,y\ge 0, x+ay\le A+aB\bigr\}.
$$
We define 
$$
k_{A,B}(a): = {\ts \frac{A+Ba}d,}
$$
 where $d$ is the  smallest positive integer such that
 $$
\#\bigl(T^a_{A,B}\cap \Z^2\bigr) \le {\ts \frac 12} (d+1)(d+2) + k-1,
$$
where $k$ is the number of integer points on the slant edge of $T^a_{A,B}$.
Further, set
$$
c_{ECH}(a): = \sup_{A,B}\bigl\{k_{A,B}(a)\bigr\}.
$$
Since $c_{ECH}(a)$ is nondecreasing, we may extend it to rational $a$ by defining
$$
c_{ECH}(a) = \sup_{z<a, \,z \mbox{ irrat}}\; c_{ECH}(z).
$$

\NI {\bf Claim II:}   $ c_{ECH}(a)\le c(a)$ {\it for all} $a$.\MS

As mentioned above, Claim II is not yet fully proven, but it is expected to hold. Further, it seems probable that these two functions actually coincide.  As evidence for this, we have the following results that were explained to me by Hutchings.

\begin{lemma}\labell{le:ECH1} Suppose that $a$ is rational, define $
T=T^{a}_{A,B}$ as above and suppose that
$$
\# (T\cap \Z^2)\le {\ts \frac 12(d+1)(d+2) + k-1 = \frac 12 (d^2+3d)} + k,
$$
 where $k\ge 1$ is the number of integral points on the slant edge of $T$.  Assume that $(A,B)$ (resp. $(A',B')$) is the integral point on the slant edge with smallest (resp. largest) $x$-coordinate.
  Then there is $\eps>0$ such that 
  $$
c_{ECH}(z) \ge {\ts \frac {A+zB}d} \mbox{ if } z \in (a-\eps,a),\quad
c_{ECH}(z) \ge {\ts \frac {A'+zB'}d} \mbox{ if } z \in (a,a+\eps).
$$
\end{lemma}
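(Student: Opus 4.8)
The plan is to compare, for $z$ slightly below or slightly above $a$, the triangle built from the left‑ or right‑hand slant‑edge data to the given triangle $T = T^{a}_{A,B} = T^{a}_{A',B'}$ (these two coincide since $A+aB = A'+aB'$), and to show that passing from $T$ to $T^{z}$ deletes exactly $k-1$ lattice points, so that the same $d$ remains admissible in the definition of $k_{A,B}(z)$, respectively $k_{A',B'}(z)$.

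First I would record that when $z$ is irrational the slant edge $\{x+zy=A+zB,\ x,y\ge 0\}$ of $T^{z}_{A,B}$ contains exactly one lattice point, namely $(A,B)$: if $(x,y)\in\Z^{2}$ lies on it with $y\ne B$ then $z=(x-A)/(B-y)\in\Q$. Hence the integer "$k$" appearing in the definition of $k_{A,B}(z)$ equals $1$, so $k_{A,B}(z)=(A+zB)/d'$ where $d'$ is the least positive integer with $\#(T^{z}_{A,B}\cap\Z^{2})\le\tfrac12(d'+1)(d'+2)$; likewise for $k_{A',B'}(z)$.

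Next, the crux: I claim there is $\eps>0$ such that
$$\#(T^{z}_{A,B}\cap\Z^{2})=\#(T\cap\Z^{2})-(k-1)\quad\text{for }z\in(a-\eps,a),$$
and $\#(T^{z}_{A',B'}\cap\Z^{2})=\#(T\cap\Z^{2})-(k-1)$ for $z\in(a,a+\eps)$. Three observations establish this. (a) A lattice point of $T$ not on the slant edge satisfies $x+ay<A+aB$ strictly, so by continuity it still lies in $T^{z}$ for $z$ near $a$; only finitely many such points occur, so a single $\eps$ works. (b) For a lattice point on the slant edge of $T$ we have $x-A=a(B-y)$, so the defining inequality of $T^{z}_{A,B}$ becomes $a(B-y)\le z(B-y)$; for $z<a$ this holds exactly when $y\ge B$, and since $(A,B)$ is the slant‑edge lattice point of largest $y$ (it has smallest $x$), exactly $(A,B)$ survives into $T^{z}_{A,B}$ and the other $k-1$ move strictly outside. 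For $z>a$, working with $T^{z}_{A',B'}$ and $x-A'=a(B'-y)$, the inequality $a(B'-y)\le z(B'-y)$ holds exactly when $y\le B'$, and since $(A',B')$ has smallest $y$ among slant‑edge lattice points, only it survives. (c) No lattice point outside $\ov{T}$ enters $T^{z}$ for $z$ near $a$: the triangles $T^{z}$ vary continuously and stay in a fixed bounded box, hence meet only finitely many lattice points, and any such point lying outside $\ov{T}$ fails $x+ay\le A+aB$ strictly, hence fails $x+zy\le A+zB$ for every $z$ sufficiently close to $a$; shrink $\eps$ to cover all of them. Combining (a)–(c) gives the two displayed identities.

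Finally, the hypothesis $\#(T\cap\Z^{2})\le\tfrac12(d+1)(d+2)+k-1$ together with the count above gives $\#(T^{z}_{A,B}\cap\Z^{2})\le\tfrac12(d+1)(d+2)$ for $z\in(a-\eps,a)$; by the first step $d$ is then an admissible value in the definition of $k_{A,B}(z)$, so the minimal $d'$ satisfies $d'\le d$ and $k_{A,B}(z)=(A+zB)/d'\ge(A+zB)/d$. Since $c_{ECH}(z)=\sup_{A,B}k_{A,B}(z)$, the first asserted inequality follows, and the second follows in the same way from $(A',B')$ on $(a,a+\eps)$. The main obstacle is step (c): perturbing $z$ enlarges the $y$‑intercept of $T^{z}_{A,B}$ (or the $x$‑intercept of $T^{z}_{A',B'}$), so a priori the triangle could absorb new lattice points, and one must rule this out; the compactness argument above does so, but it genuinely uses that only a one‑sided neighborhood of $a$ is needed. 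A secondary point of care is the bookkeeping in (b), which relies precisely on the hypothesis that $(A,B)$ and $(A',B')$ are the extreme slant‑edge lattice points — this is exactly what pins the drop in lattice‑point count to $k-1$ rather than to something smaller.
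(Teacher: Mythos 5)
Your proof is correct and follows essentially the same route as the paper's: the paper likewise reduces to irrational $z$ near $a$ and observes that $T^z_{A,B}$ (resp.\ $T^z_{A',B'}$) contains exactly $k-1$ fewer integral points than $T$, so that $d$ remains admissible and $k_{A,B}(z)\ge (A+zB)/d$. The only difference is that the paper asserts the $k-1$ drop in one sentence, whereas you supply the extremal-point bookkeeping and the check that no new lattice points enter as the triangle pivots.
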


\begin{proof}  To prove the statement for $z<a$ it suffices by continuity to consider irrational $z$ of the form $z=a-\eps$.  
Then, for small enough $\eps>0$, the triangle $T^z_{A,B}$ 
contains $k-1$ fewer integral points than  $T$.   Therefore 
$k_{A,B}(z)\ge \frac {A+zB}d$, which proves the first statement.
  Similarly,  the second statement holds because if $z=a+\eps$ is irrational and $\eps>0$ is sufficiently small, the 
triangle $T^z_{A',B'}$ contains $k-1$ fewer integral points than  $T$.
\end{proof}

\begin{lemma}\labell{le:ECH2} For all $n\ge 1$, $c_{ECH}(b_n)\ge \sqrt{a_{n+1}}$.
\end{lemma}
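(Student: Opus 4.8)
The plan is to feed a carefully chosen triangle into Lemma~\ref{le:ECH1}. Write $g_n$ for the odd Fibonacci numbers, so that $g_{n+2}=3g_{n+1}-g_n$ (with the convention $g_0=1$), $b_n=g_{n+2}/g_n$, and $\sqrt{a_{n+1}}=g_{n+2}/g_{n+1}$; also $\gcd(g_n,g_{n+2})=1$, so that $b_n$ is written in lowest terms (consecutive $g_j$ are coprime, hence $\gcd(g_n,g_{n+2})=\gcd(g_n,3g_{n+1})=\gcd(g_n,3)=1$). Since at the rational point $b_n$ we have by definition $c_{ECH}(b_n)=\sup_{z<b_n}c_{ECH}(z)$, it suffices to find a triangle for which Lemma~\ref{le:ECH1} furnishes, for $z$ just below $b_n$, a lower bound on $c_{ECH}(z)$ converging to $g_{n+2}/g_{n+1}$. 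I will take
$$
T:=T^{b_n}_{A,B},\qquad (A,B):=(0,g_n),\qquad d:=g_{n+1},
$$
the triangle with vertices $(0,0)$, $(g_{n+2},0)$, $(0,g_n)$; its slant edge lies on $g_nx+g_{n+2}y=g_ng_{n+2}$, on which $(0,g_n)$ is the integral point of smallest $x$-coordinate, and $\tfrac{A+b_nB}{d}=\tfrac{b_ng_n}{g_{n+1}}=\tfrac{g_{n+2}}{g_{n+1}}=\sqrt{a_{n+1}}$.

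The substance is checking the hypothesis $\#(T\cap\Z^2)\le\tfrac12(d+1)(d+2)+k-1$, with $k$ the number of integral points on the slant edge; in fact equality holds. Since $\gcd(g_{n+2},g_n)=1$, the segment from $(g_{n+2},0)$ to $(0,g_n)$ is primitive, so its only integral points are its two endpoints and $k=2$. By Pick's theorem, $T$ has area $\tfrac12 g_ng_{n+2}$ and $g_{n+2}+g_n+1$ boundary lattice points (the $g_{n+2}-1$ and $g_n-1$ relative-interior points of the two coordinate edges, none in the interior of the slant edge, plus the three vertices), so
$$
\#(T\cap\Z^2)=\tfrac12 g_ng_{n+2}+\tfrac12(g_{n+2}+g_n+1)+1=\tfrac12(g_{n+2}+1)(g_n+1)+1.
$$
Now apply the identities $g_n+g_{n+2}=3g_{n+1}$ (immediate from $g_{n+2}=3g_{n+1}-g_n$) and the Cassini/Catalan-type identity $g_ng_{n+2}=g_{n+1}^2+1$ (the quantity $g_{n+1}^2-3g_ng_{n+1}+g_n^2$ is invariant under the shift $n\mapsto n+1$, hence equals its value $-1$ at $n=0$). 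Together they give $(g_{n+2}+1)(g_n+1)=g_ng_{n+2}+(g_n+g_{n+2})+1=(g_{n+1}^2+1)+3g_{n+1}+1=(g_{n+1}+1)(g_{n+1}+2)$, so that $\#(T\cap\Z^2)=\tfrac12(d+1)(d+2)+1=\tfrac12(d+1)(d+2)+k-1$.

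With the hypothesis verified, Lemma~\ref{le:ECH1} produces $\eps>0$ with $c_{ECH}(z)\ge\tfrac{A+zB}{d}=\tfrac{z\,g_n}{g_{n+1}}$ for all $z\in(b_n-\eps,b_n)$. Letting $z\nearrow b_n$ and using $c_{ECH}(b_n)=\sup_{z<b_n}c_{ECH}(z)$ gives $c_{ECH}(b_n)\ge\tfrac{b_ng_n}{g_{n+1}}=\tfrac{g_{n+2}}{g_{n+1}}=\sqrt{a_{n+1}}$, as claimed. The only genuine obstacle is the Fibonacci bookkeeping: the lattice-point bound is an equality, so there is no room for error in the two identities above (or in the base case, which is where the convention $g_0=1$ matters); everything else is routine Pick/Ehrhart counting. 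Incidentally, combined with the earlier corollary $c(b_n)=\sqrt{a_{n+1}}$ and Claim II this yields $c_{ECH}(b_n)=c(b_n)$, the promised evidence that the two functions agree.
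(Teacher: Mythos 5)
Your proof is correct and follows essentially the same route as the paper's: the same triangle with vertices $(0,0)$, $(g_{n+2},0)$, $(0,g_n)$, the same identities $g_n+g_{n+2}=3g_{n+1}$ and $g_ng_{n+2}=g_{n+1}^2+1$, and the same application of Lemma~\ref{le:ECH1} with $k=2$ and $d=g_{n+1}$. You simply make explicit some bookkeeping the paper leaves implicit (the Pick's-theorem count, the coprimality of $g_n$ and $g_{n+2}$, and passing from $z<b_n$ to the rational endpoint via the supremum definition of $c_{ECH}$).
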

\begin{proof} Consider the triangle $T_n\subset\R^2$ with vertices $(0,0)$, $(g_{n+2},0)$ and $(0,g_n)$, where $g_n$ is the $n$th odd Fibonacci number.  Because $g_n, g_{n+2}$ are mutually prime and satisfy the identities
$$
g_n+g_{n+2} = 3g_{n+1},\qquad g_n g_{n+2} = g_{n+1}^2 + 1,
$$
we find that
\begin{eqnarray*}
\# (T_n\cap \Z^2) &=& {\ts\frac 12} (g_n+1)(g_{n+2}+1) + 1\\
&=&  {\ts\frac 12} (g_{n+1}^2 + 3g_{n+1}) + 2.
\end{eqnarray*}
Therefore, because $b_n = \frac {g_{n+2}}{g_{n}}$,
 we can apply Lemma \ref{le:ECH1} with $k=2$.  Hence  for some $\eps>0$ we have
$$
c_{ECH}(z)\ge{\ts \frac {z g_{n}}{g_{n+1}} \mbox { when } z\in (b_n-\eps, b_n],}
$$
and
$$
c_{ECH}(z)\ge {\ts \frac {g_{n+2}}{g_{n+1}} \mbox { when } z\in [b_n, b_n+\eps).}
$$
In particular, $c_{ECH}(b_n) = \frac {g_{n+2}}{g_{n+1}} = \sqrt{a_{n+1}}$.
\end{proof}

The arguments that prove  Claim II should also prove that $c_{ECH}$ has the same {\it scaling property} as $c$, namely the function
$
\frac {c_{ECH}(a)}{a}$ should be  nonincreasing.
If so, one can
 conclude that $c_{ECH}(a) = c(a)$ for $a\in [1,\tau^4]$. To see this, observe first that  Lemma \ref{le:ECH2} would imply that the two functions agree at $a=b_n$, and second that  $c(a)$ is the smallest nondecreasing function with the scaling property  
that takes the given values at $b_n$.

Numerical evidence suggests that $c_{ECH}(a) = c(a)$ for all $a$.
It seems very likely that, once  $c(a)$ is fully calculated, one could verify this by finding suitable triangles as in Lemma \ref{le:ECH2}.

To my knowledge, rather little is known about lattice counting functions such as $c_{ECH}$. Here is one very simple result.

\begin{lemma}\labell{le:0} 
 $c_{ECH}(a)\ge \sqrt a$ for all $a\ge 1$.
 \end{lemma}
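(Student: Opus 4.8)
\NI\textit{Plan of proof.}\quad
The idea is to exhibit, for each irrational $a>1$, an explicit family of test triangles on which $k_{A,B}(a)$ approaches $\sqrt a$. This will suffice: the case $a=1$ is an easy direct check (the triangle $T^1_{A,0}$ contains $\frac12(A+1)(A+2)$ lattice points and has $k=A+1$ integral points on its slant edge, which forces $d=A$ and hence $k_{A,0}(1)=1$), while for rational $a>1$ the defining formula $c_{ECH}(a)=\sup_{z<a,\ z\ \text{irrat}}c_{ECH}(z)$ together with the continuity of $\sqrt{\cdot}$ reduces the claim to the irrational case. So fix an irrational $a>1$.

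\NI I would take $B=0$ and use the triangles $T^a_{A,0}=\{(x,y):x,y\ge 0,\ x+ay\le A\}$ with $A\in\N$ large. The one place irrationality enters is that $x+ay=A$ has no integral solution other than $(x,y)=(A,0)$, so the slant edge of $T^a_{A,0}$ carries exactly $k=1$ integral point. Hence the integer $d=d(A)$ in the definition of $k_{A,0}(a)$ is simply the least positive integer with
$$
\#\bigl(T^a_{A,0}\cap\Z^2\bigr)\ \le\ \tfrac12(d+1)(d+2),
$$
and $k_{A,0}(a)=A/d$.

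\NI Next I would estimate the lattice count. Since $T^a_{A,0}$ has legs $A$ and $A/a$ and area $\tfrac{A^2}{2a}$, a routine count---summing $\lfloor A-ay\rfloor+1$ over $0\le y\le\lfloor A/a\rfloor$ and comparing with $\int_0^{A/a}(A-ay)\,dy$---gives
$$
n\ :=\ \#\bigl(T^a_{A,0}\cap\Z^2\bigr)\ =\ \frac{A^2}{2a}+O_a(A).
$$
Because $d$ is the least positive integer with $(d+1)(d+2)\ge 2n$, it satisfies $d\le\sqrt{2n}+1$; inserting the estimate for $n$ gives $d\le\tfrac{A}{\sqrt a}+C(a)$ with $C(a)$ independent of $A$. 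Hence
$$
c_{ECH}(a)\ \ge\ k_{A,0}(a)\ =\ \frac Ad\ \ge\ \frac{A}{\tfrac{A}{\sqrt a}+C(a)}\ \longrightarrow\ \sqrt a\qquad(A\to\infty),
$$
and therefore $c_{ECH}(a)\ge\sqrt a$.

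\NI The only step that requires genuine care is the lattice-point estimate: one must check that its error is truly $O_a(A)$---of lower order than the leading term $\tfrac{A^2}{2a}$---which holds because a triangle of ``size'' $A$ has only $O_a(A)$ boundary lattice points and its interior count is the area up to such a boundary correction. Granting that, the constraint forces $d=\tfrac{A}{\sqrt a}\bigl(1+o(1)\bigr)$, and combined with $k=1$ (the one spot where irrationality of $a$ is used) this gives $A/d\to\sqrt a$. The rest is bookkeeping.
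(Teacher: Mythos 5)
Your proof is correct and follows essentially the same route as the paper: take $B=0$, estimate $\#\bigl(T^a_{A,0}\cap\Z^2\bigr)$ by the area $\tfrac{A^2}{2a}$ with an $O_a(A)$ error, and conclude $k_{A,0}(a)=A/d\to\sqrt a$. You simply spell out the details the paper leaves implicit (the value $k=1$ for irrational $a$, the bound on $d$, and the reduction of the rational case to the irrational one).
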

 \begin{proof} This holds because when $s$ is large the number of integral points in 
 $
T^a_{s,0}$ is estimated by its area $\frac 12 s^2/a$ with  error $O(s)$.
Therefore $k_{s,0}\to \sqrt a$ as  $s\to \infty$.  Since $A+Ba= s$ in this case, the result follows.
\end{proof} 

In  \cite{HL}, Hardy and Littlewood\footnote
{
I am indebted to Peter Sarnak for this reference.}
 consider rather different asymptotic questions  about the number of lattice points in $T^a_{A,B}$, looking at asymptotical behavior for fixed $a$ rather than comparing 
the triangles for a given $a$ with those for $a=1$ as we do. Interestingly enough the golden ratio $\tau$ also plays a prominent role in their work. 

\begin{rmk}\rm  Given positive numbers $a,b$ with $a\le b$, denote by $N(a,b)$ the list (with repetitions) of all nonzero numbers of the form $ma + nb$, where $m,n$ are nonnegative integers, put in nondecreasing order.  Thus $N(1,1) = (1,1,2,2,2,3,\dots)$. Write 
$$
N(a,b)\preccurlyeq N(c,d)
$$
 if  for all $k\ge 1$ the $k$ entry of $N(a,b)$ is no greater 
 than the $k$th entry of $N(c,d)$.  Then it is easy to check that
 $$
 c_{ECH}(a) = \inf\bigr\{\mu : N(a,1) \preccurlyeq N(\mu,\mu)\bigr\}.
 $$
 Thus, if $c_{ECH}(a) =c(a)$, it would follow that
 $$
 E(a,1)\se B(\mu,\mu)\; \Longleftrightarrow \;
 N(a,1) \preccurlyeq N(\mu,\mu).
 $$
As pointed out by Hofer,\footnote
{
at a problem session at the Introductory Workshop to the Symplectic Topology program, August 21, 2009  at MSRI, Berkeley.} it is an easy step
from here to wonder whether 
$$
 E(a,1)\se E(c,d)\;  \Longleftrightarrow \;
 N(a,1) \preccurlyeq N(c,d).
 $$
Note that the embedding results of Guth \cite{G} and Hind--Kerman  \cite{HK} imply that the analogous 
result in dimensions $> 4$ does {\it not} hold.  However, embedded contact homology also does not exist in dimensions $> 4$. The question of what the obstructions actually are in higher dimensions is very interesting.
\end{rmk}

%%%%%%%%%%%%%%%%%%%%%%%%%%%%%%%%%%%%%%%%%%%%%%%%%%%%%%%
\subsection{Ideas behind the proofs.}\labell{ss:proofs}
%%%%%%%%%%%%%%%%%%%%%%%%%%%%%%%%%%%%%%%%%%%%%%%%%%%%%%%

Finally, we describe the basic ideas 
 behind the proofs of the two key  Propositions \ref{prop:wgt} and \ref{prop:2}.  \MS

\NI {\bf I: Embedding balls and blowing up:}\MS

 To blow up a point in complex geometry you remove the point and replace it by the set of all (complex) 
 lines through that point. In two complex dimensions, this 
 set is a complex line $E\cong \C P^1$ called the {\bf exceptional divisor}.  Its normal bundle $\pi: L\to E$ has Chern class $-1$.   In this context the  basic fact is that
   $L\less E$ is biholomorphic to  $\C ^2\less \{0\}$.
\MS

On the other hand, in symplectic geometry the picture of blowing up is somewhat different.  
 Darboux's theorem shows that a neighborhood of a point $p$ in $(M^4,\om)$ can be identified with a neighborhood  of $\{0\}$ in $\R^4\equiv \C^2$.  Therefore one can
 blow up as before (with respect to a compatible complex structure) to get a manifold $\Tilde M$ with blow down map
$\bl:\Tilde M \to M$.  But one must put  a symplectic form on $\Tilde M$, and the pullback form $\bl^*(\om)$ vanishes on $E$.
  So the symplectic structure $\Tilde\om_a$ on $\Tilde M$ 
must have the form $bl^*(\om) + a\pi^*(\si_E)$ near  $E$, where $\int_E\si_E=1$ and $\pi:$ nbhd$(E)\to E$.
In this context the basic fact is the following:\MS

\begin{center}
 $\bigl(($nbhd$\,E\;\less\;E),\Tilde\om_a\bigr)$ is symplectomorphic to  $\C^2\less B(a)$.
\end{center}
\MS

\NI This means that to blow up symplectically with weight $a$  one removes an embedded copy of the open  ball ${\rm int\,}B(a)$ and then collapses $\p B(a)=S^3$ to $S^2\equiv E$ by the Hopf map (along the leaves of the characteristic foliation). 

Proposition \ref{prop:2} is proved by using this description of symplectic blowing up to convert the ball packing problem into a question about the existence of symplectic forms on the $k$-fold blow up of $\C P^2$.  The latter problem is solved using the theory of $J$-holomorphic curves as in \cite{MP,B}.
\MS\MS

\NI {\bf II: Cutting $E(a,1)$ into balls via toric models} 
\SSS

We now describe the basic ideas behind the proof of Proposition \ref{prop:wgt}.

%As already mentioned (cf. Figure \ref{fig:ex1}), 
If $a=p/q$ the image of $E(a,1)$ under the moment map $\Phi: \C^2\to \R^2, (z_1,z_2)\mapsto
(|z_1|^2, |z_2|^2)$  is the triangle 
$$
\{0\le x_1,x_2;\,\, qx_1+px_2\le p\}.
$$
As illustrated on the left of Figure \ref{fig:4}, this  corresponds to a singular variety, with singular points of orders $p,q$.
(In contrast, the triangles in  Figure \ref{fig:jap2} are standard, i.e. equivalent under an integral affine transformation to a triangle with vertices $(0,0), (a,0), (0,a)$, and so represent the smooth variety $\C P^2$.)
We can cut this singular triangle into standard triangles of different sizes.

\begin{figure}[htbp] %  figure placement: here, top, bottom, or page
   \centering
   \includegraphics[width=4in]{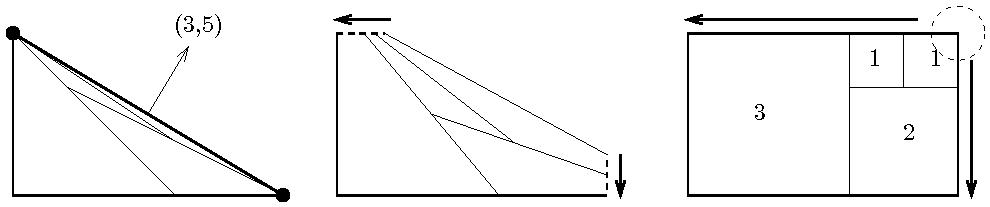} 
\caption{Cutting triangles into standard triangles as compared to cutting a rectangle into squares.}
   \label{fig:4}
\end{figure}

\NI
Figure \ref{fig:4} also shows that the triangle decomposition on the left is structurally the same as that giving the weights for $5/3$. One needs to remove the top right corner of the rectangle and then collapse its top and right side.  As we explain in more detail in \cite{M}, it also corresponds to a joint resolution of the
two singularities of the toric variety corresponding to the {\it complement} of the triangle in the positive quadrant.

\begin{figure}[htbp] %  figure placement: here, top, bottom, or page
   \centering
   \includegraphics[width=5in]{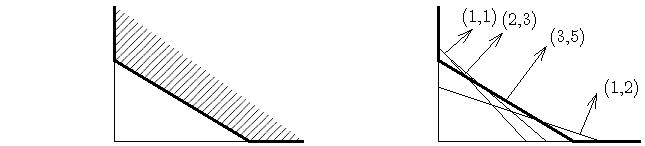} 
\caption{The lines on the right hand figure are the cuts needed to resolve the singularities of the complement of the triangle in the left hand figure.}
   \label{fig:5}
\end{figure}

Figure \ref{fig:5} illustrates another key point.  
The cuts needed to resolve the two singularities in the shaded polytope (the result of removing the singular triangle) are illustrated on the right.  Note that they are {\it parallel} to the cuts that decompose the singular triangle on the left of Figure \ref{fig:4} into standard triangles.
(These cuts resolve the singularities because the matrices whose rows are adjacent normal vectors all have determinant $\pm 1$.) 
\MS

\begin{rmk}\rm  The information contained in the weight expansion
of $a$ can be summarized in a diagram called the {\it Riemenschneider staircase}.  This usually arises in the context of resolving singularities so that the interesting numbers are the coefficients of the Hirzebruch--Jung continued fraction expansions of the two related fractions $p/q$ and $p/(p-q)$ (where we assume $p>2q$); cf. Fulton \cite{F}.  However one can also construct the diagram from the multiplicities $n_1, \dots, n_S$ of the entries in the weight expansion $\ww(a)$.
 We  illustrate this by the two examples in
Figure \ref{fig:jap5}. See Popescu-Pampu \cite{PP} for an extended discussion of these combinatorics; and Craw--Reid \cite{CR} for an example of the use of this diagram in current algebraic geometry.
 Also compare \cite[Remark~3.12(ii)]{M}.

\begin{figure}[htbp] %  figure placement: here, top, bottom, or page
   \centering
   \includegraphics[width=2.5in]{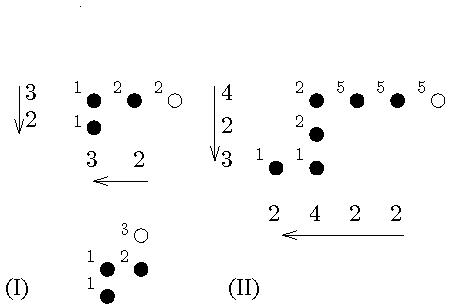} 
   \caption{(I)  illustrates $a=p/q=5/2 = [2;2]$ while (II) illustrates 
$a=p/q=17/5=[3;2,2]$.  The labels $W_i(a)$  are the smaller numbers by the dots; the vertical and horizontal numbers give the components of the Hirzebruch--Jung continued fractions for $p/q>2$ and $p/(p-q)<2$. The arrows give the direction in which one should read the numbers.}
   \label{fig:jap5}
\end{figure}

Observe first that if $p,q$ are relatively prime with $p>q$  then exactly one of the numbers
 $p/q, p/(p-q)$ is greater than two.  Thus, without loss of generality, we may assume that $p/q>2$ so that $n_0\ge2$.  
Given positive integers $n_0,\dots,n_k$ define 
 $$
{\ts \frac pq} = [n_0;n_1,n_2,\dots,n_k] = n_0 + \frac1{n_1 + \frac 1{n_2 + \dots + \frac 1{n_k}}}.
 $$
It is not hard to see that the entries of the weight expansion
$ \ww(\frac pq) = (w_0,\dots,w_N)$ have multipicities $n_0,\dots,n_k$. 
Moreover $q w_i\in \Z$ for all $i$. In other words, the vector 
$W(\frac pq) = (W_0,\dots,W_N):=q\ww(\frac pq)$ has the form
$$
{\ts W(\frac pq)} =  \Bigl(\underbrace{X_0,\dots,X_0}_{n_0}, 
\underbrace{X_1,\dots,X_1}_{n_1}, \dots, \underbrace{X_k,\dots,X_k}_{n_k}\Bigr),
$$ 
where $X_i\in \Z$ and $X_k=1$. We shall call these renormalized weights $W_i=qw_i$ the {\it labels} of $a=\frac pq$.

  The  staircase is constructed from the upper right, proceeding to the left and down, by first placing $n_0$ horizontal dots, then $n_1$ vertical dots (starting in the row of the first dots), then
 $n_2$ horizontal dots (starting in the row of the second dots), and so on.
% \footnote
% {The staircase characterizes the intersection pattern of the edges in the Farey diagram for $p/q$; for example, the number  of its rows  equals the number of the edges $\vareps_1,\dots,\vareps_{N-1}$ with normal to the left of $\vareps_N$.}
  The first dot is white, and the others are black.  One can recover the labels from the staircase by starting with $1$ at the bottom left and moving back up the staircase according to the rule:

\begin{itemize}\item[]{\it at a horizontal move, the new label is the sum of the labels in the preceding column;
at a vertical move, the new label is the sum of the labels in the preceding row.}
\end{itemize}
% 
% 
%   The number of dots characterizes the intersection pattern of the edges in the Farey diagram for $p/q$. The number $r$ of rows in the diagram equal the number of the edges $\vareps_1,\dots,\vareps_{N-1}$ with normal to the left of $\vareps_N$.  Order these edges so that their normals move to the left and label them as  $\vareps_{s_1},\dots,\vareps_{s_r} = \vareps_1$. Then the number of dots in the $k$th row is the number of edges $\vareps_i$ with $s_k<i<N$ that meet $\vareps_{s_k}$.
%Put an extra (white) dot to the right hand side of the top row, and then label  the diagram starting with $1$ 

\NI
The coefficients of the Hirzebruch--Jung continued fraction for $p/q$ are one more than the number of  black dots in each row (read downwards).  If one moves the white dot, putting it in a new row instead of a new column, and constructs the labels as described above,
then one gets the labels for the associated fraction $p/(p-q)$. Correspondingly
 the coefficients of the Hirzebruch--Jung continued fraction for $p/(p-q)$ are one more than the number of  blacks dots in each column (read from right to left).
Thus Figure \ref{fig:jap5} (I) gives $W(\frac 52) = (2,2,1,1)$ and
$$
{\ts \frac 52 = 2 + \frac 12 = 3-\frac{1}{2},\quad
\frac 53 = 1 + \frac{1}{1+\frac 12} = 2-\frac{1}{3};}
$$ 
while Figure \ref{fig:jap5} (II) gives
$W(\frac {17}5) = (5,5,5,2,2,1,1)$ and
$$
{\ts \frac {17}5 = 3 + \frac{1}{2+\frac 12} = 
4 -\frac{1}{2-\frac 13},\quad
\frac {17}{12} = 2 - \frac{1}{2-\frac 1{4-\frac 12}}.}
$$
\end{rmk}

\NI {\bf Acknowledgements}  I thank Felix Schlenk and Dorothee M\"uller for
their comments on an earlier version of this manuscript.

\end{document}